\newtheorem{theorem}{Theorem}[section]
\newtheorem{corollary}{Corollary}[section]
\begin{document}

\title[Gaussian lower bound]{A remark on the Gaussian lower bound for the Neumann heat kernel of the Laplace-Beltrami operator}

\author[Mourad Choulli]{Mourad Choulli}
\address{Institut \'Elie Cartan de Lorraine, UMR CNRS 7502, Universit\'e de Lorraine, Boulevard des Aiguillettes, BP 70239, 54506 Vandoeuvre les Nancy cedex - Ile du Saulcy, 57045 Metz cedex 01, France}
\email{mourad.choulli@univ-lorraine.fr}
\author[Laurent Kayser]{Laurent Kayser}
\address{Institut \'Elie Cartan de Lorraine, UMR CNRS 7502, Universit\'e de Lorraine, Boulevard des Aiguillettes, BP 70239, 54506 Vandoeuvre les Nancy cedex - Ile du Saulcy, 57045 Metz cedex 01, France}
\email{laurent.kayser@univ-lorraine.fr}

\date{}

\begin{abstract}
We adapt in the present  note the perturbation method introduced in \cite{CK} to get a lower Gaussian bound for the Neumann heat kernel of the Laplace-Beltrami operator on an open subset of a compact Riemannian manifold. 
 
 \smallskip
 \noindent
 {\bf Keywords.} Neumann heat kernel, Laplace-Beltrami operator, Riemannian manifold.
 
 \smallskip
 \noindent
 {\bf Mathematics Subject Classification.} 35K08
\end{abstract}

\maketitle

%===========================================
%===========================================

\section{Introduction}
%\setcounter{section}{1}

%\subsection*{Introduction} 
The study of heat kernels  is an important problem in the theory of parabolic PDE's. The properties of heat kernels give an efficient tool to answer to some central questions  both in analysis and probability theory. One of the main questions is to know whether a heat kernel  admits Gaussian bounds. An upper Gaussian bound is for instance an useful tool for getting $L^p$-$L^q$ estimates, the analyticity of the corresponding semigroups in $L^p$ for any finite $p\geq1$ or bounded functional calculus, whereas one can get a strong maximum principle or a Harnack inequality from a  lower Gaussian bound. We refer to the textbooks \cite{Dav}, \cite{Ou} and \cite{St} and references therein for more details on the subject.

\smallskip
In the preceding work \cite{CK}, starting from the classical  parametrix method, we constructed the Neumann heat kernel of a general parabolic operator as a perturbation of the fundamental solution of the same operator by a single-layer potential. From this construction, the two-sided Gaussian bounds for the fundamental solution and taking into account the smoothing effect in time of the single-layer potential, we succeeded in proving a lower Gaussian bound for the Neumann Green function. We adapt in the present note this method  to establish a lower Gaussian bound for the Neumann heat kernel of Laplace-Beltrami operator.
%================================================

\smallskip
In this text $\mathcal{M}=(\mathcal{M},g)$ is a $n$-dimensional compact connected Riemannian manifold without boundary and $\Omega$ is a  domain in $\mathcal{M}$ so that its boundary $\Sigma$ is an $(n-1)$-dimensional Riemannian submanifold of $M$ when it is equipped with the metric induced by $g$.

\smallskip
The Riemannian measure on $\mathcal{M}$ is denoted by $dV$ while the density measure on $\Sigma$ is denoted by $dA$. The geodesic ball of center $x\in \mathcal{M}$ and radius $r>0$ is denoted by $B(x,r)$.

\section{Neumann heat kernel}

\smallskip
Let $d$ be the Riemannian distance function and
\[
\mathscr{E}(x,y,t)=(4\pi t)^{-n/2}e^{-\frac{d^2(x,y)}{4t}}.
\]
In general $\mathscr{E}$ is not a heat kernel on $\mathcal{M}$. However the parametrix method by S. Minakshisundaram and \AA.Pleijel shows that any compact Riemannian manifold has an almost euclidien heat kernel (e.g. \cite{BGM, Cha}). In particular, the heat kernel $p$ of $\mathcal{M}$, satisfies 
%the pointwise  Gaussian upper  bound
\[
p(x,y,t)\sim \mathscr{E}(x,y,t),
\]
locally uniformly in $(x,y)$ as $t\downarrow 0$. We have a similar statement with the first order derivatives of $p$ and $\mathscr{E}$.

\smallskip
This estimate is not true in general for distant $x$ and $y$. Indeed the counter example in \cite[Example 3.1, page 23]{Mo} shows that if $\mathscr{M}=\mathbb{S}^2$, the 2-dimensional sphere equipped with the round metric, $x$ is the north pole and $y$ is the south pole, then
\[
p(x,y,t)\sim ct^{-3/2}e^{-\frac{d^2(x,y)}{4t}},
\]
for some constant $c>0$.

\smallskip
From \cite[Theorem 5.5.11 and Theorem 5.6.1, page 173]{Dav}, any complete Riemannian manifold with non negative Ricci curvature satisfies the following two-sided Gaussian bounds.
\[
\mathscr{E}(x,y,t) \le p(x,y,t)\le \frac{c}{V(B(x,\sqrt{t}))}e^{-\kappa \frac{d^2(x,y)}{4t}},\;\; x,y\in \mathcal{M},\; t>0.
\]
Here $c>0$ and $\kappa >0$ are some constants.

\medskip
Let $\Delta =\Delta _g$ be the Laplace-Beltrami operator associated to the metric $g$ and denote by $\nu$ the outward normal vector field to $\Sigma$. Following the idea in \cite{CK}, we construct the Green function of  the Neumann problem
\begin{equation}\label{e1}
\left\{
\begin{array}{ll}
(\partial _t-\Delta )u=0\;\; &\mbox{in}\; \Omega \times (0,+\infty ),
\\
\frac{\partial u}{\partial \nu}=0, &\mbox{on}\; \Sigma \times (0,+\infty ),
\end{array}
\right.
\end{equation}
as a perturbation of the heat kernel $p$ by a single-layer potential. As a first step, we seek the solution, in $C^{2,1}(\Omega \times (0,+\infty ))\cap C^{0,1}(\overline{\Omega}\times [0,+\infty ))$, of the following IBVP
\begin{equation}\label{e2}
\left\{
\begin{array}{ll}
(\partial _t-\Delta )u=0\;\; &\mbox{in}\; \Omega \times (0,+\infty ),
\\
\frac{\partial u}{\partial \nu}=0, &\mbox{on}\; \Sigma \times (0,+\infty ),
\\
u(\cdot ,0)=\psi \in C_0^\infty (\Omega)
\end{array}
\right.
\end{equation}
of the form 
\begin{align*}
u(x,t)=\int_0^t\int_\Sigma p(x,y,s)\varphi (y,t-s)dA(y)ds+\int_\Omega p(x,&y,t)\psi (y)dV(y),
\\ &(x,t)\in \Omega \times (0,+\infty ).
\end{align*}
We obtain from the jump relation in \cite[Theorem 2, page 161]{Cha} that $\varphi$ must be the solution of the following integral equation
\begin{align*}
\varphi (x,t)=-2\int_0^t\int_\Sigma \frac{\partial p}{\partial \nu _x}&(x,y,s)\varphi (y,t-s)dA(y)ds
\\
&-2\int_\Omega \frac{\partial p}{\partial \nu _x}(x,y,t)\psi (y)dV(y),\;\; (x,t)\in \Sigma \times (0,+\infty ).
\end{align*}
This integral equation is solved by successive approximations. We get
\[
\varphi (x,t)=\varphi _0(x,t)+\int_0^t\int_\Sigma r(x,y,s)\varphi _0(y,t-s)dA(y)ds,\;\; (x,t)\in \Sigma \times (0,+\infty ).
\]
Here
\begin{align*}
&\varphi _0(x,t)=-2\int_\Omega \frac{\partial p}{\partial \nu _x}(x,y,t)\psi (y)dV(y),\;\; (x,t)\in \Sigma \times (0,+\infty ),
\\
&r(x,y,t)=\sum_{j\geq 1} r^j(x,y,t),\;\; (x,t)\in \Sigma \times (0,+\infty ),\; y\in \overline{\Omega},
\end{align*}
with
\begin{align*}
&r^1(x,y,t)=-2\frac{\partial p}{\partial \nu _x}(x,y,t),\;\; (x,t)\in \Sigma \times (0,+\infty ),\; y\in \overline{\Omega},
\\
&r^{j+1}(x,y,t)=-2\int_0^t\int_\Sigma r^1(x,z,t-s)r^j(z,y,s)dA(z)ds,\;\; j\geq 1,
\\
&\hskip 7cm (x,t)\in \Sigma \times (0,+\infty ),\; y\in \overline{\Omega}.
\end{align*}

In other words
\begin{align*}
r(x,y,t)=-2\frac{\partial p}{\partial \nu _x}(x,y,t) -2\sum_{j\geq 1}\int_0^t\int_\Sigma r^j(x,&z,s)\frac{\partial p}{\partial \nu _x}(z,y,t-s)dA(z)ds
\\ &(x,t)\in \Sigma \times (0,+\infty ), \; y\in \overline{\Omega}.
\end{align*}

The following two inequalities will be useful in the sequel. They are taken from \cite{Cha}.

\smallskip
For any $\mu >0$, there is a constant $C_0$ such that
\begin{equation}\label{e3}
\left| \frac{\partial p}{\partial \nu _x}(x,y,t) \right|\leq C_0t^{-\mu}d^{-n+2\mu}(x,y),
\end{equation}
for any $x\in \Sigma$, $y\in \overline{\Omega}$ and $t\in (0,T]$.

\smallskip
There exists a constant $C_1$ such that, for any $\alpha ,\beta \in (0,n-1)$,
\begin{equation}\label{e4}
\int_\Sigma d^{-\alpha}(x,z)d^{-\beta}(z,y)dA(z)\leq C_1\left\{ \begin{array}{ll} d^{n-1-(\alpha +\beta )}(x,y)&\;\; \textrm{if}\; \alpha +\beta > n-1,\\ 1 &\;\; \textrm{if}\; \alpha +\beta < n-1.\end{array}\right.
\end{equation}
for all $x,y\in \overline{\Omega}$, $x\neq y$.

With the help of inequality \eqref{e3}, we prove similarly to \cite[(3.6)]{CK}
\[
\varphi (x,t)=\int_\Omega r(x,y,t)\psi (y)dV(y).
\]
Then
\[
u(x,t)=\int_\Omega q(x,y,t)\psi (y) dV(y),
\]
where
\[
q(x,y,t)=p(x,y,t)+\int_0^t\int_\Sigma p(x,z,s)r(z,y,t-s)dA(z)ds.
\]
We call this function the Neumann heat kernel for the problem \eqref{e1}. We leave to the reader to verify that $q$ satisfies the following reproducing property
\[
q(x,y,t)=\int_\Omega q(x,z,t-s)q(z,y,s)dV(z),\;\; x,y\in \Omega ,\; 0<s<t,
\]
and
\[
\int_\Omega q(x,y,t)dV(y)=1,\;\; x\in \Omega ,\; t>0.
\]

%======================================================

\section{Gaussian lower bound}

We set $v(x,r)= V(B(x,r)\cap \Omega )$, $x\in \Omega$, and we consider the following three assumptions.

\smallskip
$(VLB)$ (volume lower bound)  There exist two constants $C$ and $r_0$ so that 
\[
v(x,r)\geq Cr^n,\;\; x\in \Omega ,\; 0<r\leq r_0.
\]

\smallskip
$(DP)$ (doubling property) There exist two constants $r_1>0$ and $C>0$ so that \[ v(x,s)\leq C\left(\frac{s}{r}\right)^nv(x,r), \] for all $0<r\leq s\leq r_1$ and $x\in \Omega$.

\smallskip
$(CC)$ (chain condition) There exists a constant $C>0$ such that for any $x,y\in \Omega$ and $k\in \mathbb{N}$, we find a sequence of points $(x_i)_{0\leq i\leq k}$ so that $x_0=x$, $x_k=y$ and
\[
d(x_i,x_{i+1})\leq C\frac{d(x,y)}{k},\;\; 0\leq i\leq k-1.
\]

In the flat case, $(VLC)$ and $(DP)$ are true for any Lipschitz domain  and $(CC)$ holds for instance for a convex domain.

\smallskip
Let $\mathbb{S}^1$ be the unit sphere of $\mathbb{R}^2$ equipped with the round metric, that is the metric induced by the Euclidean metric on $\mathbb{R}^2$. Then any segment of length strictly less than $\pi$ possesses the the three conditions $(VLB)$, $(DP)$ and $(CC)$. Let $\mathbb{S}^2$ be the unit sphere of $\mathbb{R}^3$ equipped with the round metric. Then it is not hard to show that the sub-domain of $\mathbb{S}^2$ given by $z>\delta >0$ satisfies also the three conditions $(VLB)$, $(DP)$ and $(CC)$.

\smallskip 
Conditions $(DP)$ and $(CC)$ are usual (see for instance \cite[Theorem 7.29, page 248]{Ou}), while condition $(VLB)$ guarantees that the near diagonal lower bound \eqref{e8} above holds. 

\smallskip
We aim to sketch the proof of the following theorem

\begin{theorem}\label{thmGLB}
Fix $T>0$ and assume that $\Omega$ satisfies $(VLB)$, $(DP)$ and $(CC)$. Then
\begin{equation}\label{e9}
q(x,y,t)\geq \frac{c}{v(x,\sqrt{t})}e^{-\frac{d^2(x,y)}{ct}}, \;\; x,y\in \Omega ,\; 0<t\leq T.
\end{equation}
\end{theorem}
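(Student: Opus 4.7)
The plan is to follow the two-step strategy of \cite{CK}. First, I would establish a near-diagonal lower bound for $q$ of the form $q(x,y,t)\ge c/v(x,\sqrt t)$ whenever $d(x,y)\le \eta\sqrt t$; then I would propagate it to the full Gaussian lower bound by the classical chaining argument built on $(CC)$, $(DP)$, and the reproducing property for $q$.

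For the near-diagonal bound I would start from the perturbation formula
\[
q(x,y,t)=p(x,y,t)+\int_0^t\!\!\int_\Sigma p(x,z,s)\,r(z,y,t-s)\,dA(z)\,ds
\]
and treat the single-layer integral as a controlled perturbation of $p$. The Minakshisundaram--Pleijel parametrix gives $p(x,y,t)\ge c_0 t^{-n/2}$ locally uniformly as $t\downarrow 0$, and since $v(x,\sqrt t)\le Ct^{n/2}$ on a compact manifold, this already yields $p\ge c/v(x,\sqrt t)$ on $\{d(x,y)\le\eta\sqrt t,\ t\le t_0\}$. To control the correction, I would combine \eqref{e3} with repeated applications of \eqref{e4} in the Volterra iteration defining $r=\sum_j r^j$, choosing $\mu$ small enough that, first, the iterated exponents stay in the convergent regime of \eqref{e4} and, second, the series sums to a bound of the rough shape $|r(x,y,t)|\le C\,t^{-\mu}d(x,y)^{2\mu-n+1}$. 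Coupling this with the Gaussian upper bound on $p$ inside the space-time integral extracts an extra power of $t$, so the correction is $O(t^{-n/2+\delta})$ uniformly on compact sets and is dominated by $\tfrac12 p$ for small $t$. The restriction $t\le t_0$ is then removed up to $t\le T$ by iterating the reproducing property a bounded number of times, using positivity of the Neumann kernel and $(DP)$ to identify volumes at the two scales.

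Once the near-diagonal bound is in place, the full Gaussian bound follows by a standard chaining argument. Given $x,y\in\Omega$ and $t\in(0,T]$, one picks an integer $k\sim d(x,y)^2/(\eta^2 t)$ so that $d(x,y)/k\le\eta\sqrt{t/k}$, uses $(CC)$ to build $x=x_0,\ldots,x_k=y$ with $d(x_i,x_{i+1})\le Cd(x,y)/k$, and applies the reproducing property $k$ times. Restricting each intermediate integration to $B(x_i,\tfrac12\eta\sqrt{t/k})\cap\Omega$ allows the near-diagonal bound to be invoked at every link, and $(VLB)$ furnishes a lower bound on the volume of each such region. Multiplying the $k$ resulting factors together and using $(DP)$ to identify $v(x_i,\sqrt{t/k})$ with $v(x,\sqrt t)$ up to a constant yields the prefactor $v(x,\sqrt t)^{-1}$ times $c^k$, which is exactly $e^{-c' d^2(x,y)/t}$.

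The technical heart of the proof, which I expect to be the most delicate part, is the estimate on the kernel $r$ and its iterates. Because of the alternating signs in the recursion defining $r^{j+1}$ the single-layer density is sign-indefinite, so positivity of the correction cannot simply be invoked; one must sum $\sum_j|r^j|$ and then verify that the resulting estimate is small enough, both in its singularity in $d$ and in its behaviour as $t\downarrow 0$, to keep the correction strictly below $p$ near the diagonal. The choice of $\mu$ in \eqref{e3} must be calibrated so that every iterated pair of exponents stays inside the convergent regime of \eqref{e4} throughout the induction; this quantitative bookkeeping is exactly the place where the Euclidean argument of \cite[\S 3]{CK} needs to be adapted to the Riemannian geometry of $\Sigma\subset\mathcal{M}$.
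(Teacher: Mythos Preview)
Your strategy coincides with the paper's: bound the single-layer correction $q_0$ by $Ct^{-n/2+\alpha}$ via the iterated estimates \eqref{e3}--\eqref{e4} on $r$, combine with the parametrix lower bound $p\ge Ct^{-n/2}$ to get a near-diagonal bound $q\ge Ct^{-n/2}$ for small $t$, convert to $q\ge C/v(x,\sqrt t)$, then run the chaining argument of \cite[Theorem~7.29]{Ou} and finally extend to $0<t\le T$.

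Two small corrections. First, the conversion step goes the wrong way as you have written it: from $q\ge Ct^{-n/2}$ you need $v(x,\sqrt t)\ge c\,t^{n/2}$, i.e.\ the \emph{lower} bound supplied by $(VLB)$, not the upper bound $v\le Ct^{n/2}$ coming from compactness; with only the latter you cannot conclude $q\ge c/v(x,\sqrt t)$. The paper invokes $(VLB)$ at exactly this point. Second, the paper's bound on the full resolvent kernel is $|r(x,y,t)|\le Ct^{-\mu}d(x,y)^{-n+2\mu}$ with $1/2<\mu<n/2$, the same exponent as for $r^1$ in \eqref{e3}; your stated exponent $2\mu-n+1$ and the phrase ``choosing $\mu$ small enough'' should be adjusted accordingly.
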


\begin{proof}[Sketch of the proof]
Let $1/2 <\mu < n/2$. In light of \eqref{e3} and \eqref{e4}, reasoning as in the proof of \cite[Lemma 3.1]{CK}, we obtain
\[
|r(x,y,t)|\leq Ct^{-\mu}d^{-n+2\mu}(x,y),
\]
for any $x\in \Sigma $, $y\in \overline{\Omega}$, $x\neq y$, $t\in (0,T]$.

Let
\[
q_0(x,y,t)=\int_0^t\int_\Sigma p(x,z,s)r(z,y,t-s)dA(z)ds,\;\; x,y\in \Omega ,\; t>0,
\]
and $0<\alpha <1/2$. We proceed as in the beginning of the proof of \cite[Theorem 3.1]{CK} to get
\begin{equation}\label{e5}
|q_0(x,y,t)|\leq Ct^{-n/2+\alpha }\;\; x,y\in \Omega ,\; t>0.
\end{equation}

Let $\epsilon :=\textrm{inj}(\mathcal{M})/4$, where $\textrm{inj}(\mathcal{M})$ is the injectivity radius of $\mathcal{M}$. It follows from \cite[formula (45), page 154]{Cha} that there exists $\eta >0$ so that
%We recall (see ofr instance  \cite[Theorem 5.6.1, page 173]{Dav}) that when $M$ has Ricci curvature $\mbox{Ric}\geq0$ then $p$ is bounded from below by 
%$\mathscr{E}$. That is
\begin{equation}\label{e6}
p(x,y,t)\geq \mathscr{E}(x,y,t),\;\;  0<t\le \eta ,\; x,y\in \mathcal{M},\; d(x,y)\leq \epsilon .
\end{equation}
Hence,
\begin{equation}\label{e7}
p(x,y,t)\geq Ct^{-n/2},\;\; 0<t\le \inf (\eta ,\epsilon ^2),\; x,y\in \mathcal{M},\; d(x,y)\leq \sqrt{t}.
\end{equation}

Now a combination of \eqref{e5} and \eqref{e7} leads
\begin{align*}
q(x,y,t)\geq p(x,y,t)-|q_0(x,y,y)|&\geq Ct^{-n/2}(1-ct^\alpha),
\\
& 0<t\le \inf (\eta ,\epsilon ^2),\; x,y\in \Omega ,\; d(x,y)\leq \sqrt{t}.
\end{align*}
In consequence, there is $\delta >0$ such that
\begin{equation}\label{e8_0}
q(x,y,t)\geq Ct^{-n/2},\;\; \mbox{if}\;\; 0<t\leq \delta ,\;\; x,y\in \Omega ,\;\; d(x,y)\leq \sqrt{t}.
\end{equation}
In light of the volume lower bound $(VLB)$, this estimate entails
\begin{equation}\label{e8}
q(x,y,t)\geq \frac{C}{v(x,\sqrt{t})},\;\; \mbox{if}\;\; 0<t\leq \widetilde{\delta} ,\;\; x,y\in \Omega\;\; d(x,y)\leq \sqrt{t},
\end{equation}
for some constant $\widetilde{\delta}$.

\smallskip
We can now mimic the proof of \cite[Theorem 7.29, page 248]{Ou}. We get from \eqref{e8} the following Gaussian lower bound
%\begin{equation}\label{e9}
\[
q(x,y,t)\geq \frac{c}{v(x,\sqrt{t})}e^{-\frac{d^2(x,y)}{ct}}, \;\; x,y\in \Omega ,\; 0<t\leq \widetilde{\delta}.
\]
%\end{equation}
We finally use the argument as in \cite[Theorem 3.1]{CK} to pass from $0<t\leq \widetilde{\delta}$ to $0<t\leq T$.
\end{proof}

It is worthwhile mentioning that one can establish a lower Gaussian bound by considering $\mathcal{N}=\overline{\Omega}$ itself as a compact Riemannian manifold with boundary. The structure of Riemannian manifold is the one inherited from $\mathcal{M}$. Obviously, \eqref{e8_0} entails
\begin{equation}\label{e11}
q(x,y,t)\geq Ct^{-n/2},\;\;  0<t\leq \delta ,\;\; x, y\in \mathcal{N},\;\; d_\mathcal{N} (x,y)\leq \sqrt{t}.
\end{equation}
Assume that the Ricci curvature of $\mathcal{N}$ is such that $Ric\geq (n-1)\kappa g$ for some $\kappa \in \mathbb{R}$. Then $\mathcal{N}$ satisfies the a doubling property $(DP)$ when $V$ is substituted by $V_\mathcal{N}$, the volume measure over $\mathcal{N}$. This fact is an immediate consequence of \cite[formula in the bottom of page 7]{He}. Let $v_\mathcal{N}(x,r)=V_\mathcal{N}(B(x,r))$, where $B(x,r)$ is the geodesic ball in $\mathcal{N}$ of center $x\in \mathcal{N}$ and radius $r$. In that case we can proceed as in the proof of Theorem \ref{thmGLB} to derive the following estimate.
\[
q(x,y,t)\geq \frac{c}{v_\mathcal{N}(x,\sqrt{t})}e^{-\frac{d_\mathcal{N}^2(x,y)}{ct}}, \;\; x,y\in \Omega ,\; 0<t\leq T.
\]
This estimate should be compared to the one obtained by Li and Yau in \cite[Theorem 4.2, page 184]{LY}. Specifically, they established a lower Gaussian bound for the heat kernel of a compact Riemannian manifold with convex boundary and having non negative Ricci curvature.

\smallskip
When $\mathcal{M}$ is any complete Riemannian manifold with finite diameter and having volume doubling property, and $\Omega$ is Lipschitz domain in $\mathcal{M}$ with volume doubling property, the Neumann heat kernel of $\Omega$, denoted here by $h$, satisfies the following upper Gaussian bound.
\[
h(x,y,t)\le \frac{C}{v (x,\sqrt{t})v(y,\sqrt{t})}e^{-\frac{d^2(x,y)}{8t}},\;\; x,y\in \Omega ,t>0,
\]
where $C>0$ is some constant.

\smallskip
This estimate was recently established by the authors and E. M. Ouhabaz \cite{CKO}.

%==========================================================

\section{Comments on geometric assumptions} {\it Chain condition}: A subset $\mathcal{C}$ of $\mathcal{M}$ is called strongly convex if for any $x,y\in \mathcal{C}$, there exists a unique minimal geodesic $\gamma :[0,1]\rightarrow \mathcal{M}$ joining $x$ to $y$, so that $\gamma ([0,1])\subset \mathcal{C}$. According to a theorem due to Whitehead (see for instance \cite[pages 161 and 162]{GKM}), there exists a positive continuous function $\epsilon :\mathcal{M}\rightarrow (0,\infty ]$, the convexity radius, such that any open ball $B(x,r)\subset B(x,\epsilon (x))$ is strongly convex. It is straightforward to check that if $\Omega$ is strongly convex then it has the chain condition.

\smallskip
{\it Volume lower bound}: Let $T_x\mathcal{M}$ be the tangent space at $x\in \mathcal{M}$, $\mathbb{S}_x\subset T_x\mathcal{M}$ the unit tangent sphere and $S\mathcal{M}$ the unit tangent bundle. Let $\Phi_t$ be the geodesic flow with phase space $S\mathcal{M}$. That is, for any $t\geq 0$, 
\[
\Phi_t:S\mathcal{M}\rightarrow S\mathcal{M}: (x,\xi )\in S\mathcal{M} \rightarrow\Phi_t (x,\xi )=\left( \gamma_{x,\xi }(t),\dot{\gamma}_{x,\xi }(t)\right).
\]
Here $\gamma_{x,\xi}:[0,\infty )\rightarrow \mathcal{M}$ is the unit speed geodesic starting at $x$ with tangent unit vector $\xi$ and $\dot{\gamma}_{x,\xi}(t)$ is the unit tangent vector to $\gamma_{x,\xi}$ at $\gamma_{x,\xi}(t)$ in the forward $t$ direction.

\smallskip
If $(x,\xi)\in S\mathcal{M}$, we denote by $r(x,\xi)$ the distance from $x$ to the cutlocus in the direction of $\xi$:
\[
r(x,\xi )=\inf\{t>0;\; d(x,\Phi_t(x,\xi ))<t\}.
\]

We fix $\delta \in (0,1]$ and $r>0$.  Following \cite{Sa}, a $(\delta ,r)$-cone at $x\in \mathcal{M}$ is the set of the form
\[
\mathscr{C}(x,\omega _x,r)=\{ y=\gamma_{x,\xi}(s);\; \xi\in \omega _x,\; 0\leq s<r\},
\]
where $\omega _x$ is a subset of $\mathbb{S}_x$ so that $r<r(x,\xi )$ for all $\xi \in \omega_x$ and $|\omega _x |\geq \delta$ (here $|\omega _x|$ is the volume of $\omega _x$ with respect to the normalized measure on the sphere $\mathbb{S}_x$).

\smallskip
A domain $D$ which contains an $(\delta ,r)$-cone at $x$ for any $x \in D$ is said to satisfy
the interior $(\delta ,r)$-cone condition.

\smallskip
We observe that if $\mathcal{C}$ is a closed strongly convex subset of $\mathcal{M}$, then $\Omega =\mathcal{M}\setminus \mathcal{C}$ has the $(1/2 ,r)$-cone condition, for some $r$ (this fact follows from the same argument to that in \cite[Example 8.1, page 370]{Sa}).

\smallskip
%We recall that a complete simply connected Riemannian manifold without boundary with nonpositive sectional curvature is usually called  a Cartan-Hadamard %manifold. 
Let
\[
s_\kappa (r)=\left\{
\begin{array}{ll}
\left(\frac{\sin (\sqrt{\kappa}r}{\sqrt{\kappa}}\right)^{n-1}\;\; &\textrm{if}\; \kappa >0,
\\
r^{n-1} &\textrm{if}\; \kappa =0,
\\
\left(\frac{\sinh (\sqrt{-\kappa}r}{\sqrt{-\kappa}}\right)^{n-1}\  &\textrm{if}\; \kappa <0.
\end{array}
\right.
\]

\smallskip
We make the assumption that the sectional curvature of $\mathcal{M}$ is is bounded above by a constant $\kappa$, $\kappa \in \mathbb{R}$,  and $\Omega$  satisfies the interior $(\delta ,r)$-cone condition. Let $J(x,\xi ,t)$ be the density of the volume element in geodesic coordinates around $x$:
\[
dV(y)=J(x,\xi ,t)d_{\mathbb{S}_x}dt,\;\; y=\gamma _{x,\xi}(t),\; t<r(x,\xi ).
\]
By an extension of G\"unther's comparison theorem (see for instance \cite{KK}),  $J$ satisfies the following uniform lower bound
%\begin{equation}\label{lb}
\[
J(x,\xi ,t)\geq s_\kappa (t).
\]
%\end{equation}
Consequently, shrinking $r_0$ if necessary, we have
\begin{equation}\label{ce}
v(x,r)\geq V (\mathscr{C}(x,\omega _x,r))\geq c_0 r^n,\;\; x\in \Omega ,\; 0<r\leq r_0 ,
\end{equation}
which means that $v$ satisfy the volume lower bound $(VLB)$.

\smallskip
Additionally, if  $\mathcal{M}$ satisfies the following volume growth condition 
\begin{equation}\label{vgc}
V(x,r)\leq c_1r^n, \;\; 0<r\leq r_0,
\end{equation}
for some constants $c_1$ and $r_1$ then $v$ has the doubling property $(DP)$.

\smallskip
As a consequence of Theorem \ref{thmGLB}, we have
\begin{corollary}\label{thmGLB2}
Assume that the sectional curvature of $\mathcal{M}$ is bounded from above,  the volume growth condition \eqref{vgc} is fulfilled and $\Omega$ is strongly convex and satisfies the interior $(\delta ,r)$-cone condition. Then
\begin{equation}\label{e10}
q(x,y,t)\geq c\mathscr{E}(x,y,ct), \;\; x,y\in \Omega ,\; 0<t\leq T.
\end{equation}
\end{corollary}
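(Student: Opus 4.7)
The plan is to verify that the three geometric conditions $(VLB)$, $(DP)$, $(CC)$ of Theorem \ref{thmGLB} hold under the hypotheses of the corollary, apply that theorem, and then rewrite the resulting bound in the Gaussian form \eqref{e10}.

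First I would check the three conditions. $(CC)$ is immediate from strong convexity: given $x,y \in \Omega$, the unique minimal geodesic $\gamma:[0,1]\to\Omega$ joining them yields, for every $k\in\mathbb{N}$, the chain $x_i = \gamma(i/k)$ with $d(x_i,x_{i+1}) = d(x,y)/k$. $(VLB)$ is exactly inequality \eqref{ce} already established in the paper, following from the sectional curvature upper bound, the interior $(\delta,r)$-cone condition, and G\"unther's comparison theorem. $(DP)$ then follows by combining $(VLB)$ with the upper volume growth \eqref{vgc}: for $0 < r \leq s \leq r_0$,
\[
v(x,s) \leq V(B(x,s)) \leq c_1 s^n = c_1\Big(\frac{s}{r}\Big)^n r^n \leq \frac{c_1}{c_0}\Big(\frac{s}{r}\Big)^n v(x,r).
\]

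Having verified the three conditions, Theorem \ref{thmGLB} produces constants $c',c''>0$ with
\[
q(x,y,t) \geq \frac{c'}{v(x,\sqrt{t})} e^{-d^2(x,y)/(c''t)}, \quad x,y \in \Omega,\ 0 < t \leq T.
\]
Using \eqref{vgc} on the small-scale range and compactness of $\mathcal{M}$ on the complementary range $\sqrt{t}\in[r_0,\sqrt{T}]$, one obtains a uniform upper bound $v(x,\sqrt{t}) \leq C t^{n/2}$ for $t \in (0,T]$. Substituting and choosing the new time parameter to be $c''/4$ makes the exponents match ($d^2/(4\cdot(c''/4)t) = d^2/(c''t)$), so the right-hand side becomes a constant multiple of $\mathscr{E}(x,y,(c''/4)t)$, which is \eqref{e10} under the usual convention that $c$ denotes possibly distinct positive constants on the two sides.

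The main obstacle I foresee is \emph{scale matching}: the volume upper bound \eqref{vgc} is given only up to scale $r_0$, while $(DP)$ in Theorem \ref{thmGLB} is nominally stated up to some scale $r_1$. Reading the sketch of the proof of Theorem \ref{thmGLB} shows, however, that the Harnack-type chaining argument only invokes $(DP)$ at scales $\sqrt{t}\leq\sqrt{\widetilde\delta}$, and the passage from $\widetilde\delta$ to $T$ is handled by a separate compactness/iteration step borrowed from \cite{CK}. Once this is unpacked, the remaining work is elementary bookkeeping of constants.
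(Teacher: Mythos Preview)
Your proposal is correct and follows exactly the route the paper intends: the discussion preceding the corollary verifies $(CC)$ from strong convexity, $(VLB)$ from the cone condition plus the sectional curvature bound via G\"unther's inequality \eqref{ce}, and $(DP)$ from \eqref{ce} together with \eqref{vgc}; the corollary is then stated as an immediate consequence of Theorem~\ref{thmGLB}, with the passage from $c/v(x,\sqrt{t})$ to $c\mathscr{E}(x,y,ct)$ implicit in the volume upper bound \eqref{vgc}. Your handling of the scale-matching issue and the large-$t$ range via compactness is the natural way to make that implicit step precise.
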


%===========================================================

\medskip
% The data information below will be filled by AIMS editorial staff
%Received xxxx 20xx; revised xxxx 20xx.
\medskip

\end{document}